\theoremstyle{plain}
\newtheorem{theorem}{Theorem}
\newtheorem{proposition}{Proposition}
\theoremstyle{definition}
\newtheorem{definition}{Definition}
\newtheorem{example}{Example}
\theoremstyle{plain}
\newtoks\thehProclaim
\newtheorem*{Proclaim}{\the\thehProclaim}
\theoremstyle{definition}
\newtoks{\thehRemark}
\newtheorem*{Remark}{\the\thehRemark}
\renewcommand{\leq}{\leqslant}
\renewcommand{\geq}{\geqslant}
\begin{document}

\title{Extremal Area of Polygons, sliding along a circle}

\author{Dirk Siersma}

\address{Utrecht University, Department of Mathematics, P.O.Box 80.010, 3508TA, Utrecht, The Netherlands}
\email{d.siersma@uu.nl}

\subjclass[2000]{58K05, 52B99}

\keywords{Area, polygon, ellipse, critical point, Morse index}
\date{\today}

\begin{abstract}
We determine all critical configurations for the Area function on polygons with vertices on a circle or an ellipse. For isolated critical points we compute their Morse index, resp index of the gradient vector field. We relate the computation at an isolated degenerate point to an eigenvalue question about combinations. In the even dimensional case non-isolated singularities occur as `zigzag trains' .

\end{abstract}

\maketitle

\date{\today}

\section{Introduction}
A traditional problem for the maximal area of a polygon is the Isoperimetric Problem. It dates back to the antiquity. Many mathematicians where involved in this problem, e.g. Steiner; see e.g. the historical overview of   Bl\r{a}sj\" o  \cite{isoper}.

\smallskip

Several problems of maximal area with other constraints  have been  studied; many of them concern computer science, e.g. as maximal triangle in a convex polygon.

\smallskip

Recently there became more attention for the other critical points of Area  as a function on a configuration space. A goal  is a Morse-theoretic approach: determine all critical points, check if they are Morse and if so compute the Morse index. In specific problems critical points and indices can be successfully  described by nice geometric properties.

This has been carried out  e.g.  for the  signed Area function on  linkages with given edge length  \cite{KP},  \cite{PZ} and more recently in the context of the isoperimetric problem \cite{KPS}.

\smallskip

We continue this approach in a very elementary case:  A polygon with $n$ vertices on a circle. We allow self-intersections and coinciding vertices. We use the signed Area function $sA$. This has the advantage and disadvantage that due  to orientation we can have negative area and in case of multiple covered regions we count with multiplicities.

Our main theorem \ref{t:mainthm} gives geometric criteria for the critical points and determines also the Hesse matrix at those points. Most of the critical points are of Morse type and look as a regular star, but several  of them have zigzag behaviour. The Morse index is determined  by  combinatorial data.  The remaining cases are degenerate: All points coincide (degenerate star) which has an isolated critical point in case $n$ is odd; and zigzag-trains, which are non-isolated in case $n$ is even.

We compute also the index of the gradient vector field at the degenerate star by Euler-characteristic arguments. In the last section we discuss the Eisenbud-Levine-Khimshiashvili method to calculate this index. This relates nicely to a combinatorial question.

Note, that the problem of extremal area polygons in an ellipse is also solved due to the existence of an area preserving affine map.

This paper is a by-product of a study (in progress) of polygons with vertices sliding along a set of curves. This was triggered by the work of Wilson \cite{wilson} in 1917.

Part of the work was done at Luminy. The author thanks CIRM for the excellent working conditions, Wilberd van der Kallen for his advice around Mathematica computations, George Khimshiashvili and Gaiane Panina for useful discussions.

\section{Polygons in a circle}
In this section we give a complete description of critical polygons and compute indices at isolated zero's of the gradient vector field.

\smallskip
We first state the formula for signed Area $sA$.
Take unit circle with center $O$. We fix the point $P_1$ of the polygon with vertices $P_1, \cdots , P_n$ and work with the reduced configuration space  $(S^1)^{n-1}$.
We use $\alpha_i = \angle P_iOP_{i+1}$ as coordinates ($i=1, \cdots, n-1$).
Coordinates are modulo $2 \pi$, we choose values in the interval $(-\pi,\pi]$. $\alpha_n =\angle P_nOP_1 =2 \omega \pi - \sum_{i=1}^{n-1}  \alpha_i$.

The signed area function is $$\emph{sA} = \sum_{i=1}^{n} \sin \alpha_i,$$

\medskip

We next describe some special types of polygons (see also Figure \ref{fig:stars}):
\begin{definition} 
Let all $|\alpha_i |$  be  equal.\\ 
A {\em regular star} is a polygon with vertices on a circle and all angles $\alpha_i$ are equal and different from $0$ and $\pi$.
In case all $\alpha_i = \pi$ we call it a {\em complete fold}.\\
Regular stars are determined by the winding number $\omega$ of the polygon with respect to the center of the circle.  All angles are equal to   $\frac{2 \pi \omega}{n}$. \\
A {\em zigzag} is a polygon with vertices on a circle and all $|\alpha_i |$ are equal and different from zero, but not all $\alpha_i$ have the same sign.
Let $f$ be the number of forwards edges ($0 <\alpha_i  < \pi$) and $b$ the number of backward edges ($\alpha_i < 0$)  in the  polygon.  NB. $f+b=n$.
We define a number $m = [\frac{n-1}{2}]$; so $n=2m+1$ if $n$ is odd and $n=2m+2$ if $n$ is even.\\
In case  $f \ne b$  and $|\alpha_i|= \frac{2 \omega  \pi}{|f-b|}$, $\omega \ne 0$  
we call them {\em zigzag stars}. Zigzag stars are determined by a sequence of signs of $\alpha_i$.  Their realization in the plane is a $|f-b|$-regular star with some multiple edges.\\
In case $f=b$  gives rise to a 1-dimensional singularities, since $|\alpha_i|$ can be arbitrary;
we call them {\em zigzag trains}.  They only occur if the number of vertices is even. In that case they include the degenerate star and complete fold.

A {\em degenerate star} is the polygon with all $\alpha_i$ equal to zero.

\end{definition}

\begin{figure}[ht]
	\centering
	\includegraphics[width=1.05 \textwidth]{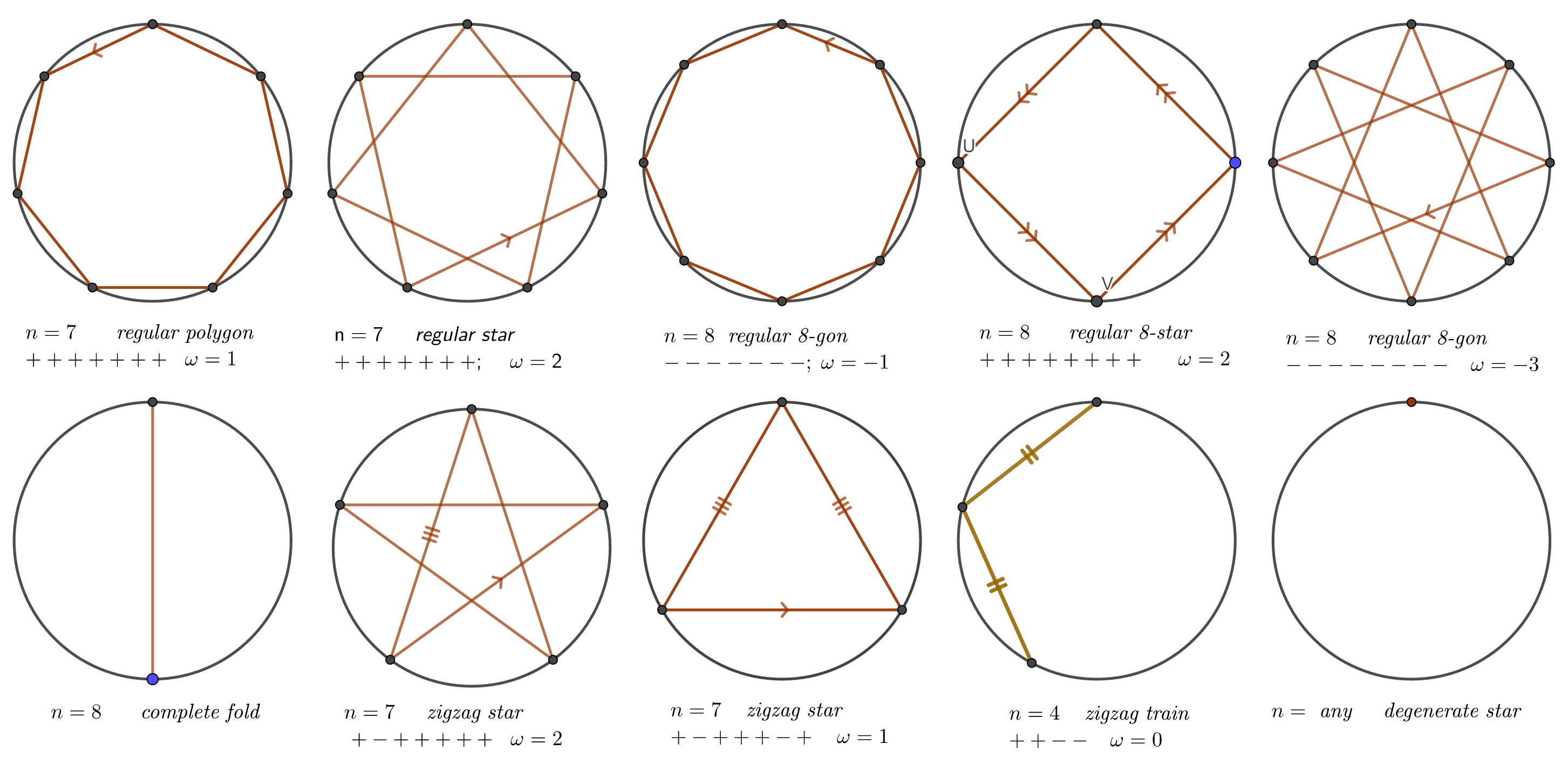}
	\caption{Some critical configurations}
	\label{fig:stars}
\end{figure}

\begin{theorem}\label{t:mainthm}

The signed area function for polygons on a circle (defined on the reduced configuration space) has critical points iff all $|\alpha_i |$ are equal. 
These critical points are 
isolated or (if the number of vertices is even) contain also a 1-dimensional singular set. Moreover
\begin{itemize}
\item[1.] The isolated singularity types are regular stars, zigzag stars and if $n=$odd also degenerate stars,
\item[2.] All regular and zigzag stars  are Morse critical points,
\begin{itemize}
\item[(i)] If $f>b$, 
the number of critical points with $ b$  backward edges is $\tbinom{n}{b}(m-b)$, each with Morse index $f-1$,
\item[(ii)]  If $f < b$ ,
the number of critical points with $ f$  forward edges is $\tbinom{n}{f}(m-f)$, each with  Morse index $f$.
\end{itemize}

\item[3.] Degenerate stars are  degenerate isolated  critical points if $n$ is odd.  Their (complex) Milnor number is $2^{n-1}$ and the real gradient map has local degree 
$\iota=  2 (-1)^m \tbinom{n-2}{m-1}$. 
\item[4.] The non-isolated case only occurs if  n = even and 
includes the complete fold, zigzag trains and degenerate stars.
The non-isolated part of the critical set contains $\tbinom{n}{\frac{n}{2}}$ branches, which meet only  at the complete fold and the degenerate stars.
\end{itemize}

\end{theorem}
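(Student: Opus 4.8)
The plan is to work entirely in the chart $(\alpha_1,\dots,\alpha_{n-1})$ with $\alpha_n=-\sum_{i=1}^{n-1}\alpha_i$ modulo $2\pi$, starting from $sA=\sum_{i=1}^{n}\sin\alpha_i$. Differentiating gives $\partial_j sA=\cos\alpha_j-\cos\alpha_n$, so a point is critical iff $\cos\alpha_1=\dots=\cos\alpha_n$; since all $\alpha_i\in(-\pi,\pi]$ this is equivalent to all $|\alpha_i|$ being equal, which is the opening assertion of the theorem. A second differentiation yields the Hessian $H=\mathrm{diag}(-\sin\alpha_1,\dots,-\sin\alpha_{n-1})-\sin\alpha_n\,J$, where $J$ is the all-ones matrix. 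At a critical point with common value $a=|\alpha_i|$ and signs $\epsilon_i$ this becomes $H=-\sin a\,(E+\epsilon_n J)$ with $E=\mathrm{diag}(\epsilon_1,\dots,\epsilon_{n-1})$, which is the object to analyse in parts 2--4.

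For part 2 one has $a\neq 0,\pi$, hence $\sin a>0$, and I would read off the spectrum of $M=E+\epsilon_n J$. As $\epsilon_n J$ is a rank-one perturbation of the sign-diagonal $E$, the eigenvalue $+1$ survives with multiplicity $p-1$ and $-1$ with multiplicity $q-1$, where $p,q$ are the numbers of $+,-$ among $\epsilon_1,\dots,\epsilon_{n-1}$; the two remaining eigenvalues solve a quadratic secular equation whose root-product equals $\pm(f-b)$ according to $\epsilon_n$ and whose root-sum is $\epsilon_n(n-1)$. A short sign analysis of this quadratic in the four cases ($f>b$ or $f<b$, and $\epsilon_n=\pm1$) shows that $H$ is nondegenerate whenever $f\neq b$, so all regular and zigzag stars are Morse, and that the number of negative eigenvalues of $H$ is $f-1$ if $f>b$ and $f$ if $f<b$. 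For the enumeration I would then choose the sign pattern and the winding number independently: there are $\binom{n}{b}$ patterns with $b$ backward edges, and for each the closure relation $a(f-b)=2\pi\omega$ with $a\in(0,\pi)$ admits exactly $m-b$ admissible integers $\omega$ (a direct count using $m=[\tfrac{n-1}{2}]$, treating $n$ even and odd separately), giving $\binom{n}{b}(m-b)$; the case $f<b$ is symmetric.

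Part 3 is the heart of the argument. At the degenerate star all $\sin\alpha_i=0$, so $H\equiv 0$ and one must pass to the initial terms. The partials $\cos\alpha_j-\cos\alpha_n$ have initial form $-\tfrac12(\alpha_j^2-S^2)$ with $S=\sum_{i<n}\alpha_i$; their common zeros force $\alpha_j=\pm S$, and the resulting identity $S=S\sum_j\sigma_j$ leaves only the origin precisely when $n$ is odd, a nonzero solution requiring $\sum_{j=1}^{n-1}\sigma_j=1$, impossible for odd $n$. Thus for $n$ odd the singularity is isolated and, the initial forms being a regular sequence of $n-1$ quadrics, a B\'ezout / intersection-multiplicity count gives complex Milnor number $2^{n-1}$. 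For the real index I would exploit that $sA$ has vanishing linear part (on the constraint) and no quadratic part, so its lowest form is the cubic $C=-\tfrac16\sum_{i=1}^n\alpha_i^3$ on $\{\sum\alpha_i=0\}$; since $C$ has an isolated singularity for $n$ odd, $sA$ is $3$-determined and right-equivalent to $C$, whence $\deg_0\nabla sA=\deg_0\nabla C$. Applying the Khimshiashvili formula $\deg_0\nabla sA=1-\chi(F)$ for a suitable Milnor fibre $F$ of this symmetric cubic, I expect the combinatorial evaluation of $\chi(F)$ to produce $\iota=2(-1)^m\binom{n-2}{m-1}$. This Euler-characteristic computation is the main obstacle, as it requires controlling the topology of a level set of a symmetric cubic restricted to a hyperplane.

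Finally, part 4 follows from the analysis of part 2: with $\sin a\neq0$ the Morse condition fails only when $f=b=\tfrac n2$, which forces $n$ even, and then $a(f-b)=0$ holds for every $a$, so $|\alpha_i|=a$ is unconstrained. Each of the $\binom{n}{n/2}$ sign patterns therefore sweeps out a one-parameter branch $\alpha_i=\epsilon_i a$, $a\in[0,\pi]$. Distinct patterns give distinct configurations for $a\in(0,\pi)$, while the branches coalesce exactly at $a=0$ (the degenerate star) and at $a=\pi$, where every $\epsilon_i\pi\equiv\pi$ and one lands on the complete fold. This realises the one-dimensional critical set as $\binom{n}{n/2}$ arcs meeting only at those two points and shows the degenerate star is non-isolated precisely for $n$ even, completing the dichotomy.
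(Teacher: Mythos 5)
Your treatment of the critical-point equation, of part 2 (the rank-one perturbation analysis of $H=-\sin a\,(E+\epsilon_n J)$, the secular quadratic, and the count $\tbinom{n}{b}(m-b)$) and of part 4 (the $\tbinom{n}{n/2}$ one-parameter branches meeting only at $a=0$ and $a=\pi$) is correct and in substance the same as the paper's, which diagonalizes the same Hessian via an explicit characteristic polynomial after normalizing $\alpha_n>0$ by a cyclic relabeling. Your Milnor-number argument in part 3 (initial quadrics with only the origin as common zero when $n$ is odd, then B\'ezout) is also sound and matches the paper's intent.

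The genuine gap is the local degree $\iota=2(-1)^m\tbinom{n-2}{m-1}$ in part 3: you never compute it. You reduce to the cubic $C$ and then say you ``expect'' Khimshiashvili's formula $\deg_0\nabla f=1-\chi(F)$ to yield the answer, conceding that evaluating $\chi(F)$ for this symmetric cubic is ``the main obstacle.'' That obstacle is real --- there is no easy handle on the Milnor fibre of $\sum\alpha_i^3$ restricted to a hyperplane --- and the paper avoids it entirely by a global argument you missed: the configuration space is the torus $(S^1)^{n-1}$, whose Euler characteristic is $0$, so by Poincar\'e--Hopf the sum of the indices of all zeros of $\nabla(sA)$ vanishes. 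Since part 2 already identifies every other critical point as Morse with known index (contributing $(-1)^{\text{index}}$), the index of the lone remaining zero is forced: $\iota=-2\sum_{b=0}^{m-1}(-1)^b\tbinom{n}{b}(m-b)$, which collapses to $2(-1)^m\tbinom{n-2}{m-1}$ by repeated use of Pascal's rule. (This is precisely why the theorem computes the non-Morse index at all: it comes for free from the classification of the Morse points.) A secondary, fixable over-claim: you assert $sA$ is $3$-determined, hence right-equivalent to $C$; a cubic in $2m$ variables with isolated singularity need not be $3$-determined (its Milnor algebra has basis monomials of degree $>3$, so semi-quasi-homogeneous normal forms admit higher-order moduli terms). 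What you actually need is weaker and standard: since $\nabla C$ is homogeneous with an isolated zero, the remainder $\nabla(sA)-\nabla C$ is of higher order on small spheres, so the two gradients are homotopic through nonvanishing maps and have equal local degree --- this is the fact the paper invokes via the reference [CGT].
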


\begin{proof}
The conditions for critical points are\\  $\cos \alpha_i = \cos \alpha_n$ ($ i=1,\cdots n-1$). This is equivalent to all $|\alpha_i |$ are equal.
If $f \ne b$ there are finitely many solutions; if $f=b$  infinitely many.
\begin{itemize}
\item[1.] This follows from the fact that  all $|\alpha_i |$ are equal.

\item[2.] 
First we  count the critical points with $ b$ edges. Since the polygon has $n$ edges, we have $\tbinom{n}{b}$ different possibilities.
We  also have to take into account the winding number $\omega$ of the polygon, which determines the embedding.  This winding number for $f >b$  is between $1$ and $m-b$.  This explains the factor in the statement.  The case $f<b$ is  similar.

\smallskip
Second, the Hessian matrix has diagonal term $-\sin \alpha_i - \sin \alpha_n$ and subdiagonal terms $p :=\sin \alpha_n $. 
We first consider positive regular stars; i.e. $f=n$ and $p > 0$. In this case $\alpha_1=\alpha_2= \cdots = \alpha_n = \frac{2 \pi \omega }{n}$ , where $\omega \ne 0, \frac{n}{2}$.  The matrix is:
{\small
$$             \begin{pmatrix}
               -2p & -p   &-p & \cdots  &-p \\
             -p  &  - 2p  & -p & \cdots & -p \\
             -p & -p & -2p & \cdots  & -p \\
						  \cdots & \cdots &\cdots &   & \cdots \\
						 -p & -p & -p & \cdots & - 2p\\
             \end{pmatrix}
$$
}
The eigenvalues are $-np,-p,-p, \cdots ,-p$.  It follows that the regular star (with $\omega >0) $ has a maximum if $1 \le \omega< \frac{n}{2}$ and a minimum if $ \frac{n}{2} < \omega   \le n-1 $.\\
NB. If $\omega = \frac{n}{2}$  we have a complete fold and a vanishing Hessian.

Next the zigzag case:\\
Assume $\alpha_n > 0$. If not, change the cyclic ordering such that this is the case. The Hesse matrix can change, but not the index.
The Hesse matrix has as elements $h_{ij} = -p$ if $i \ne j$ and $h_{ii}= 0$ if the $i^{th}$ edge is backwards and $h_{ii} = -2p$ if the $i^{th}$ edge is forward (this happens, resp $b$ and $f-1$ times). A typical example with $b=2$  is:
{\small
$$             \begin{pmatrix}
               0 & -p   &-p & \cdots  &-p \\
             -p  &  0  & -p & \cdots & -p \\
             -p & -p & -2p & \cdots  & -p \\
						  \cdots & \cdots &\cdots &   & \cdots \\
						 -p & -p & -p & \cdots & - 2p\\
             \end{pmatrix}
						$$
}
A direct computation shows now for any $b$ that the characteristic equation is:
$$ (\lambda^2 +(b+f-1)p \lambda + (b-f) p^2) \cdot (\lambda -p)^{b-1} \cdot (\lambda + p)^{f-2} = 0 $$ 
This explains the index formula of the theorem. Note that the index is independent of the places of the zigzags.

\smallskip
We have excluded the case where $b=f$.  This case corresponds to the nonisolated critical points and the Hessian determinant is zero (follows also from the above formula). 

\smallskip

\item[3.]  Degenerate star: ($n =2m+1)$\\
In case all $\alpha_i = 0$ it is clear that the 2-jet vanishes at the critical point. The 3-jet is  $ \frac{1}{3}( \alpha_1^3 + \cdots + \alpha_{n-1}^3 - (\alpha_1 +\cdots + \alpha_{n-1})^3)$. Therefore the Milnor number is $2^{n-1}$ ($n$ odd).

Next we will use that the sum of all indices of zero's of the gradient vector field is equal to the Euler characteristic of the configuration space; which is zero in our case.
We know the Morse indices of all non-degenerate critical points.  The index $\iota$ of the degenerate critical point
is therefore equal to:
$$\iota  = -2  \sum_{b=0}^{m-1}  (-1)^b \tbinom{n}{b}(m-b) = 2 (-1)^m \tbinom{n-2}{m-1} .$$
NB. This reduction of the first summation to a single binomial coefficient is due to repeated use of the formula $\tbinom{n}{k} = \tbinom{n-1}{k-1}+\tbinom{n-1}{k}$.

\item[4.] Non isolated singularities:\\
These occur only in case $n$ is even. Besides the degenerate star and the complete fold, where all $|\alpha_i|$ are $0$ or $\pi$ we get exactly the case $b=f$, which gives rise to zigzag trains. They can be distinguished by the positions of forward and backward edges. There are exactly $\tbinom{n}{\frac{n}{2}}$ possibilities. They correspond to branches, parametrized via $|\alpha|$   in the open interval $(0,\pi)$.The complete fold and degenerate star are limiting cases. 
\end{itemize}
\end{proof}

\newpage

\begin{example}
When $n=7$ we find the following critical polygons: (pictures in Figure \ref{fig:stars}).
\begin{itemize}
\item[b=0:]  3 maxima (index 6): regular stars with resp. $\omega = 1, 2, 3$,
\item[b=1:] 14  saddle points with index 5; resp  7 zigzag stars with $\omega=1$ and 7 zigzag stars with $\omega = 2$
(pentagonal shape),
\item[b=2:] 21 saddle points with index 4 : zigzag stars with $\omega=1$ (triangular shape),
\item[next:] a degenerate star with gradient-index  -20 ,
\item[b=5:] 21  saddle points with index 2 : zigzag stars with $\omega=-1$ (triangular shape),
\item[b=6:] 14  saddle points with index 1; resp  7 zigzag stars with $\omega= -1$ and 7 zigzag stars with $\omega =-2$
(pentagonal shape),
\item[b=7:]  3 minima (index 0): regular stars with resp. $\omega = -1, -2,-3$.
\end{itemize}
The gradient indices count up to the Euler characteristic of the configuration space:
 \begin{center} $ 3 -14 + 21  -20 + 21 -14 + 3 = 0$
\end{center}
\end{example}

\begin{proposition}
The   critical values of $sA$ are 
\begin{itemize}
\item for regular stars and zigzag star:  $(n-2b) \sin \frac{2  \pi \omega}{n-2b}$,
\item for degenerate stars, complete folds and zigzag trains: $0$.
\end{itemize}
\end{proposition}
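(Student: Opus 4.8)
The plan is to evaluate the function $sA=\sum_{i=1}^n\sin\alpha_i$ directly on each type of critical configuration listed in Theorem \ref{t:mainthm}. On any critical point all $|\alpha_i|$ equal a common value $\alpha$, so the sum collapses immediately once one records the signs of the $\alpha_i$ and invokes the winding-number identity $\sum_{i=1}^n\alpha_i=2\pi\omega$ already fixed in the setup. I therefore expect no analytic difficulty: the computation is pure bookkeeping, and the only step needing care is the sign convention attached to $\omega$.

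First I would handle the zigzag stars, which contain the regular stars as the case $b=0$. Writing $\alpha=|\alpha_i|$, the $f$ forward edges each contribute $\sin\alpha$ and the $b$ backward edges each contribute $\sin(-\alpha)=-\sin\alpha$, so that $sA=(f-b)\sin\alpha$. To remove $\alpha$ I use the winding relation $\sum_{i=1}^n\alpha_i=(f-b)\alpha=2\pi\omega$, giving $\alpha=\tfrac{2\pi\omega}{f-b}$. Since $f+b=n$ we have $f-b=n-2b$, and substituting yields $sA=(n-2b)\sin\tfrac{2\pi\omega}{n-2b}$, which is exactly the claimed critical value; for $b=0,\,f=n$ it specializes to the regular-star value $n\sin\tfrac{2\pi\omega}{n}$.

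Here the one subtle point is that $\alpha>0$ forces $\operatorname{sign}(\omega)=\operatorname{sign}(f-b)$, so the $\omega$ appearing in the formula is the signed winding number of the polygon; this is why the ranges of $\omega$ in the preceding example run through negative values when $b>f$ (and why it may differ in sign from the $\omega$ written with $|f-b|$ in the definition of zigzag star). Because $n-2b=f-b$ holds as an identity of signed integers, the single expression $(n-2b)\sin\tfrac{2\pi\omega}{n-2b}$ covers both cases $f>b$ and $f<b$ without a separate formula, and the main obstacle is simply to state this convention cleanly.

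Finally I would dispose of the degenerate families by direct substitution. For a degenerate star all $\alpha_i=0$, hence $sA=0$; for a complete fold all $\alpha_i=\pi$, hence $sA=n\sin\pi=0$; and for a zigzag train one has $f=b=\tfrac{n}{2}$, so $sA=(f-b)\sin\alpha=0$ for every value of the common $\alpha$. This is consistent with Theorem \ref{t:mainthm}, where these configurations form the non-isolated critical set, on which $sA$ must be locally constant and therefore identically $0$. Assembling the two computations gives the proposition.
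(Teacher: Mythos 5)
Your proof is correct: the paper states this proposition without any proof, treating it as an immediate consequence of the critical-point description in Theorem \ref{t:mainthm}, and your direct evaluation $sA=(f-b)\sin\alpha$ together with the winding relation $(f-b)\alpha=2\pi\omega$ is exactly the omitted bookkeeping. Your added remark on the sign convention for $\omega$ (signed winding number, consistent with the negative values of $\omega$ in the $n=7$ example when $b>f$) is a worthwhile clarification rather than a deviation.
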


In figure \ref{fig:extrema} we show the critical values (in case $n=7$, positive $\omega$), together with the common value $|\alpha_i|$. Negative $\omega$ results to adding a minus sign.

\begin{figure}[ht]
	\centering
	\includegraphics[width=0.6 \textwidth]{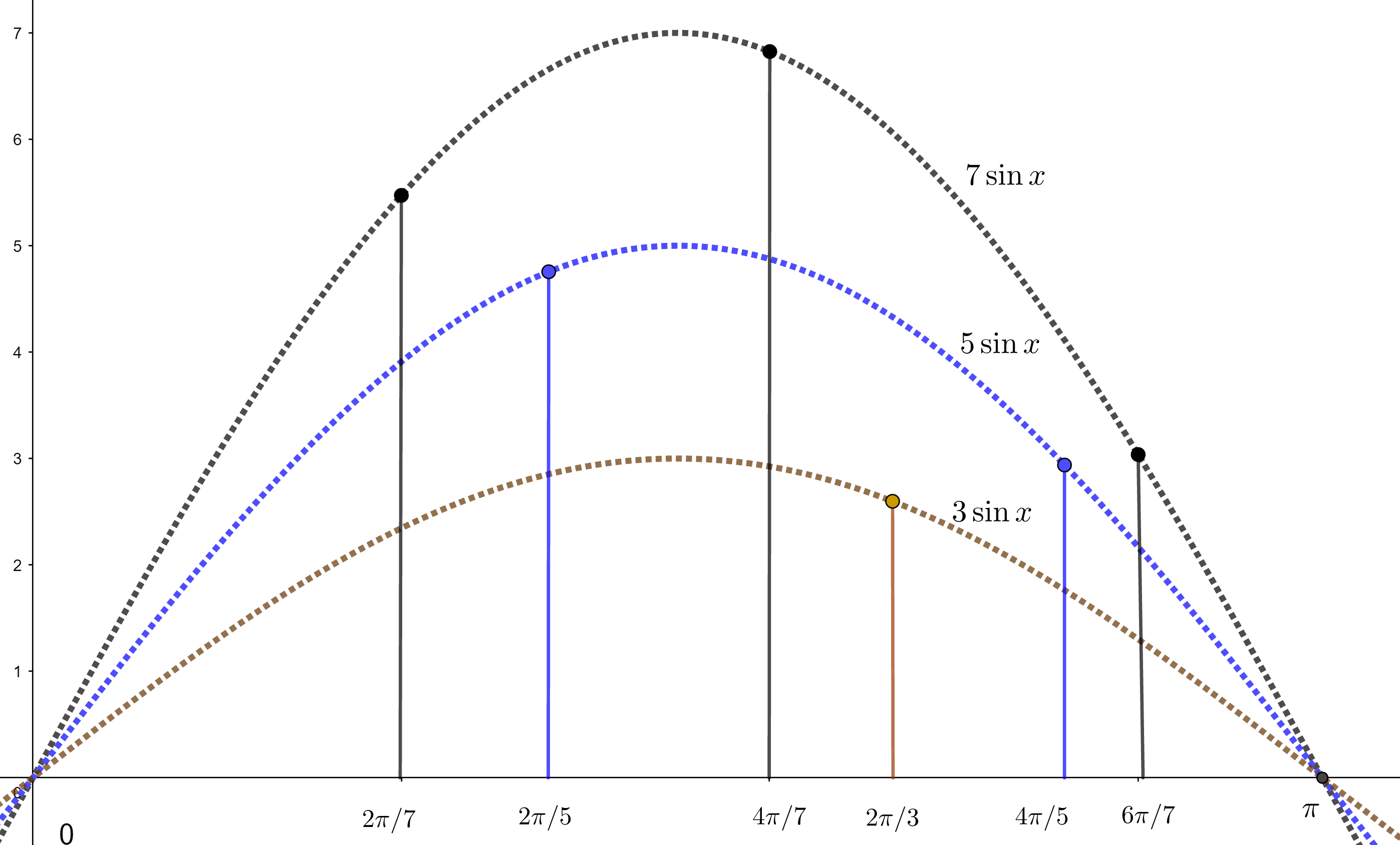}
	\caption{Extremal values, listed with$|\alpha_i |$ }
	\label{fig:extrema}
\end{figure}
Note that the absolute maximum does not occur for the regular polygon, but for a regular star with $2 \pi \omega /n$ nearest to $\pi /2$. This is because we measure Area with overlaps. All the marked points on the upper sin-curve: $y = n \sin{x}$ correspond to maxima;
on the other sin-curves $y = (n-2b) \sin{x}$  (with $b > 0$) one finds only saddle points. Drawing similar figures for $n$ even show several critical points with different $b$ but the same value. The absolute maximum is $n$ in this case.

\section{ Eisenbud-Levine-Khimshiashvili index computations}

 In case $n=2m+1$ we know that $sA$ has an isolated critical point at a degenerate star and the Eisenbud-Levine-Khimshiashvili (ELK) signature formula is available (\cite{EL},\cite{Khim}) to compute the gradient index. This can serve as a check for correctness of our computations, but also relates to a nice combinatorial question.
For computational reasons we replace Area $sA$ by its 3-jet: $f = j_3(sA)$ and compute its index. This does not  change the index, see \cite{CGT}.

\subsection{Milnor Algebra}
\subsubsection{The general case}
We first give a sketch of the ELK method for the gradient vector field. Let $f: \mathbb{R}^n ,0 \to \mathbb{R}$ be a real analytic function, such that the complexification has an isolated zero.
Let $J(f)=(f_1,\cdots,f_n)$ be the ideal generated by the partial derivatives of $f$ in the ring $O_{n,O}$ of real analytic function germs. Let $M_f =O_{n,0} /  J(f)$ , this is the (real) Milnor Algebra of $f$. Its dimension is finite if and only if the complexification of $f$ has an isolated singularity. Let $h_f$ be the Hessian determinant of  the germ $f$ and $[h_f] \in M_f$ its equivalence class modulo $J(f)$.

\smallskip
\noindent 
Let $l$ be  any linear function such that  $l ([h_f]) > 0$.
Define a nondegenerate bilinear form $\beta$ on $M_f$ by $\beta(g,h) = l (g * h)$, where $*$ is the multiplication in $O_{n,0}$. The ELK signature formula states that the index of the gradient vector field  at the origin is equal to the signature of the bilinear form $\beta$ on $M_f$, i.e the number of positive eigenvalues minus the number of negative eigenvalues.\\
In order to compute the signature we work as follows
\begin{itemize}
\item[-] choose a set of function germs, which induce  a basis of $M_f$,
\item[-] write down the multiplication table  (modulo $J(f))$ on this basis,
\item[-] compute $[h_f]$ and choose the linear function $l$ as above,
\item[-] substitute the $l$-values in the multiplication table; this gives a scalar valued symmetric matrix, compute its signature.
\end{itemize}

\subsubsection{The case of our 3-jet}

The Milnor algebra $M_f$ is graded by degree and has dimension $2^{n-1}$. The partial derivatives are homogeneous of degree $2$ and give rise to the relations:
$$ x_1^2 = x_2^2=\cdots = x_{n-1}^2 = \tfrac{-2}{n-2} \sum_{1 \le  i < j \le n-1} x_i x_j  $$
(we changed notation and use $x_i$ in stead of $\alpha_i$).
Recall $n= 2m + 1 $, the singularity is isolated and the dimension of the degree $d$ part of $M_f$ is $\binom{n-1}{d}$. A basis (modulo the partial derivatives) are given by the monomials without repeated factors.

For our purpose we need to understand especially the part of maximal degree $n-1$, which is 1-dimensional and to determine the relations between the monomials of degree $n-1$. We first look to the effect in this degree of the relations
 $x_1^2 = x_2^2=\cdots = x_{n-1}^2$
mentioned  above. It follows that the class $[x^I] = [ x_1^{i_1} x_2^{i_2} \cdots x_{n-1}^{i_{n-1}}]$ is completely determined by the unordered $(n-1)$-tuple $(i_1,i_2,\cdots,i_{n-1})$ modulo $2$.  E.g. $I_k$ consisting of $2k$ tuples equal $1$ and the others equal $0$. Let the corresponding class of monomials be  $w_k = [ x^{I_k}],  (k=0, \cdots m)$. Note $w_m =[ x_1 x_2 \cdots x_{2m}].$

Next we add the remaining relation $x_p^2 =  \tfrac{-2}{n-2} \sum_{1 \le  i < j \le n-1} x_i x_j $. \\
The effect in degree $n-1$ is the following set of relations ($p = 0, \cdots, m-1$) :
$$ \binom{2p}{2}  w_{p-1} + \{\tfrac{n-2}{2}  + 2p(n-1-2p)\} w_{p} +  \binom{n-1-2p}{2} w_{p+1}= 0 . $$
The solution space is 1-dimensional and determined by the following recurrence  relations:
$$ w_{p-1} = -  \frac{2m+2-2p}{2p-1} w_p  \;  \;  \; \; \; \; (p=1,\cdots,m),$$ 
which determines
$$ w_k = (-1)^{m-k} \frac{(2m-2k)!! (2k-1)!!}{ (2m-1)!!}w_m  \;  \;  \; \; \; \; (k=0,\cdots,m).$$

\noindent
Next:  $h_f = x_1 x_2 \cdots x_{n-1} - \sum_{1 \le i ,j \le n-1}  x_i  \frac{x_1 x_2 \cdots x_{n-1}}{x_j}$. In $M_f$  we have $$[h_f] = - (n-2) w_m - (n-1)(n-2)w_{m-1} = -(n-2) w_m + 2 (n-1)w_m = n \, w_m.$$

We fix $l(w_m) = \omega $. This determines al the other $l(w_k)$.  All other basis elements of $M_f$ have lower degree.
We set $l=0$ for all of them.

\noindent
{\bf Examples}\\
$ m=1   \; : \:   w_0  = -2 \omega \; , \; w_1 = \omega $ \\
$ m=2   \; : \:   w_0  = \frac{8}{3} \omega \; , \; w_1 = - \frac{2}{3} \omega \;  , \;  w_ 2 = \omega $ \\
$ m=3   \; : \:   w_0  = - \frac{16}{5} \omega \; , \; w_1 =  \frac{8}{15} \omega \;  , \;  w_ 2 = - \frac{2}{5} \omega \; , \; w_3 = \omega$ 

\subsubsection{
The multiplication matrix}

\smallskip
Next consider the matrix $B$ of $l(\beta)$  in block form  $(B_{i,j})$  where $B_{i,j}$ is the submatrix constructed by multiplying monomials of degree $i$ with those of degree $j$. Due to symmetry we have $B_{i,j}=B_{j,i}$. Note that $B_{i,j} = 0$ if $i+j \ge n$ . Due to the choice of $l$ we also have that $B_{i,j} = 0$  if $i+j < n-1$.

\smallskip
The matrix $B$ has now the  following anti-diagonal block form: (e..g. $n=5$)
$$
\begin{pmatrix}
0 & 0 & 0 & 0 & B_{0,4} \\
0 & 0 & 0 & B_{1,3} & 0  \\
0  & 0 & B_{2,2} & 0 & 0 \\
0 & B_{3,1} & 0 & 0 & 0 \\
B_{4,0} & 0 & 0 & 0 & 0 
\end{pmatrix}
$$
Next observe that due to the symmetry: signature $B$ = signature $B_{m,m}$.

\subsection{A combinatorial  question}
The computation of the  signature of $B_{m,m}$ is related to an eigenvalue question of combinatorial type. This question was solved in a manuscript of the paper for m=1,2,3 only. Due to the suggestions of an anonymous referee and the assistance of Wilberd van der Kallen it was possible to give the full solution. We will publish the combinatorial result (in a more general case) in a separate paper  \cite{vKS} and summarize below the results. Relevant notions are the representation theory of the symmetric group and the relation between invariant subspaces and the eigenspaces of the intertwining matrices. As references we mention \cite{Fu} and \cite{Pr}.

\medskip
\noindent
Consider combinations $\mathcal{C}^{2m}_{m}$ of $m$ elements out of a set of $2m$ elements. Take an arbitrary tuple of real numbers $b_0,\cdots ,b_m$. We constitute a matrix, where the rows and columns are indexed (in lexicografic order) by elements of $\mathcal{C}^{2m}_m$.
The matrix elements are defined as follows: 
\begin{center}
$ < \sigma, \tau> = b_p$ if  $ \sigma \cap \tau $ has $p$ elements ($\; 0 \le p \le m$).
\end{center}

\begin{figure}[ht]
	\centering
\begin{minipage}{0.49\textwidth}
\centering
	\includegraphics[width=0.4 \linewidth]{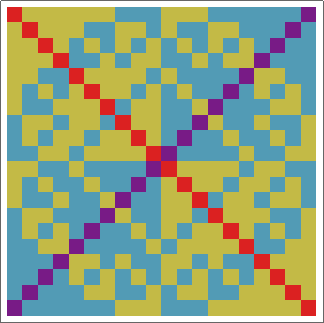}
\end{minipage}
\begin{minipage}{.49\textwidth}
\centering
	\includegraphics[width=0.4 \linewidth]{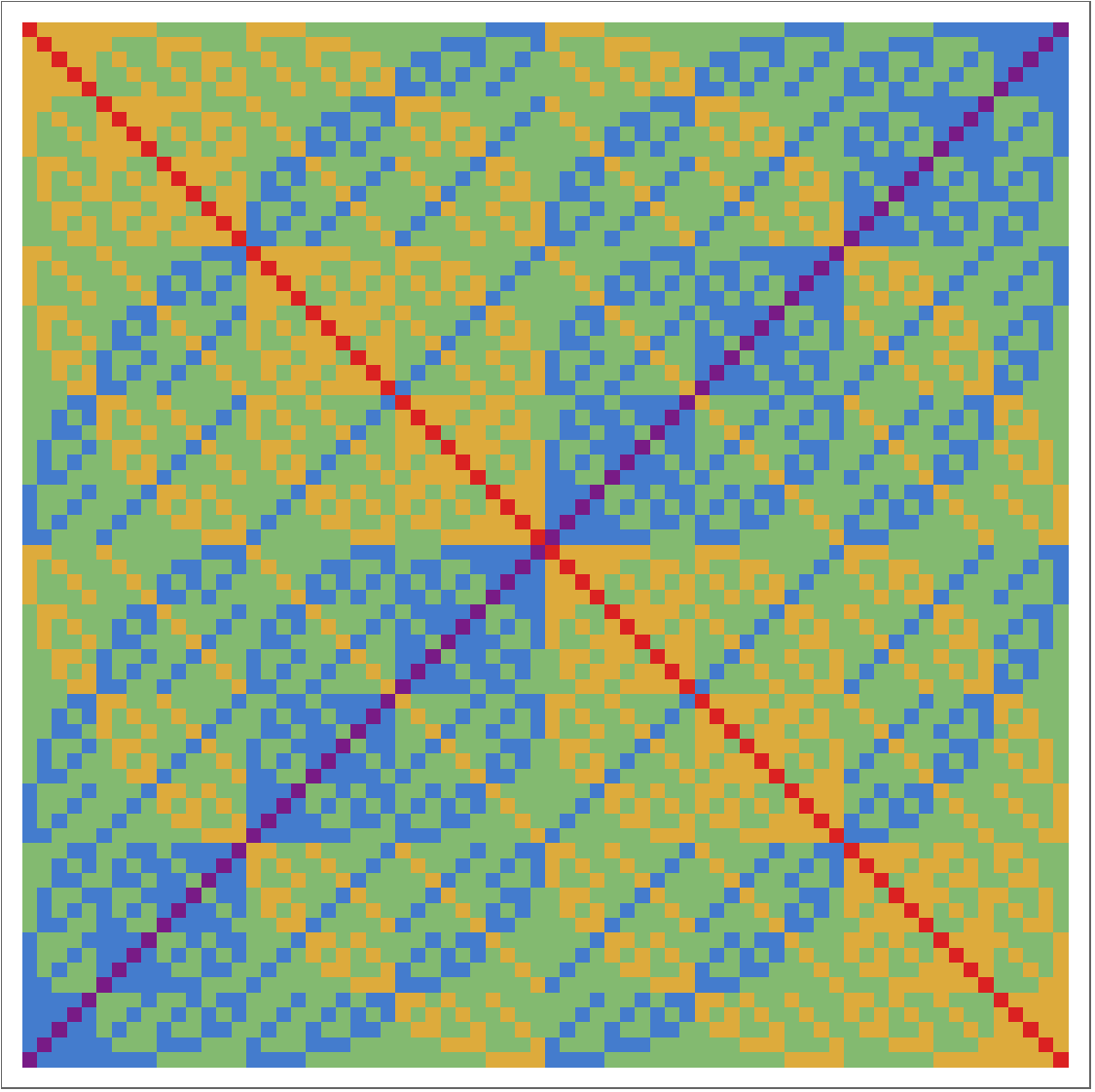}
\end{minipage}
	\label{fig:array}
\caption{$B_{2,2}$ and $B_{3,3}$;  coloured by $b_p$.}
\end{figure}

\newpage

\begin{proposition} The matrix $B_{m,m}$ has the properties:
\begin{itemize}
\item The eigenspaces  are independent of a (generic) choice of $b_0,\cdots b_m$,
\item The eigenvalues are $\mathbb Z$-linear combinations of  $b_0,\cdots b_m$.
\end{itemize}
\end{proposition}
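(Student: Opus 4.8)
The plan is to recognize $B_{m,m}$ as an element of the Bose--Mesner algebra of the Johnson scheme $J(2m,m)$ and to exploit the multiplicity-free decomposition of the associated permutation module for the symmetric group $S_{2m}$. First I would write $B_{m,m}=\sum_{p=0}^{m} b_p A_p$, where $A_p$ is the symmetric $0/1$ matrix on $\mathcal{C}^{2m}_m\times\mathcal{C}^{2m}_m$ with $(A_p)_{\sigma\tau}=1$ exactly when $|\sigma\cap\tau|=p$. The group $S_{2m}$ acts on the index set $\mathcal{C}^{2m}_m$ by permuting the ground set, and because the intersection number $|\sigma\cap\tau|$ is invariant under a simultaneous permutation of $\sigma$ and $\tau$, one has $gA_pg^{-1}=A_p$ for all $g\in S_{2m}$. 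Hence each $A_p$, and therefore $B_{m,m}$, is an $S_{2m}$-equivariant endomorphism of the permutation module $V=\mathbb{R}^{\mathcal{C}^{2m}_m}$, and being symmetric it is orthogonally diagonalizable.

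The key structural input is the classical decomposition (Young's rule for the module on $m$-subsets of a $2m$-set)
$$ V \;\cong\; \bigoplus_{j=0}^{m} S^{(2m-j,\,j)}, $$
which is \emph{multiplicity-free}: each Specht module $S^{(2m-j,j)}$ occurs exactly once, so the commutant of the $S_{2m}$-action is commutative and the $A_p$ are simultaneously diagonalizable. By Schur's lemma any equivariant operator must act as a scalar on each summand; applying this to $B_{m,m}$ shows that the $m+1$ isotypic pieces $S^{(2m-j,j)}$ are common eigenspaces, canonically determined and independent of the chosen tuple. For a generic $(b_0,\dots,b_m)$ the $m+1$ resulting eigenvalues are pairwise distinct, so the eigenspace decomposition is forced to coincide with this fixed one, which proves the first assertion.

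For the second assertion, the scalar by which $A_p$ acts on $S^{(2m-j,j)}$ is precisely the $p$-th eigenvalue of the Johnson scheme on its $j$-th eigenspace, given by the Eberlein polynomial, e.g. $\lambda_p(j)=\sum_{i}(-1)^i\binom{j}{i}\binom{m-j}{p-i}\binom{m-j}{p-i}$, a signed sum of products of binomial coefficients and hence an integer. Consequently $B_{m,m}$ acts on $S^{(2m-j,j)}$ by the scalar $\sum_{p=0}^{m} b_p\,\lambda_p(j)$, which is a $\mathbb{Z}$-linear combination of $b_0,\dots,b_m$, as claimed.

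The main obstacle is establishing the two representation-theoretic facts cleanly: the multiplicity-freeness of the decomposition of $V$ (which both makes the centralizer algebra commutative and pins down the eigenspaces) and the integrality of the scheme eigenvalues $\lambda_p(j)$. Both are classical, but they require the machinery of symmetric-group characters and association-scheme theory; once they are in place the proposition follows immediately. The explicit evaluation of the $\lambda_p(j)$, and thus the exact integer combinations giving each eigenvalue, is where the genuine bookkeeping lies, and is presumably the content developed in \cite{vKS}.
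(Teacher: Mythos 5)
Your proposal is correct and takes essentially the same route as the paper: the paper's own (very terse) proof likewise rests on Young's rule giving the multiplicity-free decomposition of the permutation module on $m$-subsets into the Specht modules $S^{(2m-j,j)}$, plus Schur's lemma, with the explicit eigenvalue computation deferred to \cite{vKS}. Your Johnson-scheme/Bose--Mesner packaging with Eberlein polynomials is simply a more explicit rendering of that same argument (up to the usual indexing convention that the scheme's $p$-th relation corresponds to intersection size $m-p$ rather than $p$).
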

This results is clear from Schur's lemma and Young's rule as the referee suggested. His remark about Specht modules made it possible to compute the eigenvalues. That work was carried out  in \cite{vKS}.

\begin{proposition}  The matrix $B_{m,m}$ has eigenvalues
$$ 
\lambda_k = \sum _{j=0}^k \sum _{p=0}^{m-k} (-1)^{k-j} \binom{k}{j}  \binom{m-j}{p} \binom{j-k-m+n}{-k+m-p} b_{j+p}
$$
each of multiplicity 1 with dimension of eigenspaces:
$\mu_k = \frac{(2m)! (2m-2k+1)}{k! (2m-k+1)!}$.
\end{proposition}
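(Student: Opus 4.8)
The plan is to diagonalise $B_{m,m}$ by the representation theory already used for the previous Proposition, and then to pin down the scalars by evaluating $B_{m,m}$ on explicit eigenvectors. First I would record the module-theoretic picture. The symmetric group $S_{2m}$ permutes the ground set $\{1,\dots,2m\}$ and hence acts on $\mathcal{C}^{2m}_m$, linearising to the Young permutation module $M=\mathbb{R}^{\mathcal{C}^{2m}_m}=M^{(m,m)}$. Since the entry $\langle\sigma,\tau\rangle=b_p$ depends only on $p=|\sigma\cap\tau|$, an invariant of the diagonal action, $B_{m,m}$ commutes with $S_{2m}$; equivalently $B_{m,m}=\sum_{p=0}^{m}b_p A_p$, where $A_p$ is the $0$--$1$ matrix recording $|\sigma\cap\tau|=p$ (the Johnson scheme). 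By Young's rule the module is multiplicity free,
\[
M^{(m,m)}=\bigoplus_{k=0}^{m} S^{(2m-k,\,k)},
\]
so by Schur's lemma $B_{m,m}$ acts as a single scalar $\lambda_k$ on each summand. This already identifies the eigenspaces with the Specht modules $S^{(2m-k,k)}$ and produces $m+1$ generically distinct eigenvalues, each simple in the list of distinct values (the asserted ``multiplicity $1$'').

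Next I would settle the dimensions. The eigenspace of $\lambda_k$ is $S^{(2m-k,k)}$, and the hook-length formula for a two-row shape gives
\[
\dim S^{(2m-k,k)}=\binom{2m}{k}-\binom{2m}{k-1}=\frac{(2m)!\,(2m-2k+1)}{k!\,(2m-k+1)!}=\mu_k ,
\]
which is the stated dimension; as a consistency check $\sum_{k=0}^{m}\mu_k=\binom{2m}{m}=|\mathcal{C}^{2m}_m|$.

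Finally the eigenvalues. Fix $k$ disjoint pairs $\{a_i,b_i\}$ ($1\le i\le k$) in the ground set and consider the product-of-differences function on $m$-subsets
\[
v(\sigma)=\prod_{i=1}^{k}\bigl(\mathbf 1[a_i\in\sigma]-\mathbf 1[b_i\in\sigma]\bigr).
\]
Such harmonic functions are eigenvectors of the Johnson scheme lying in $S^{(2m-k,k)}$, so $B_{m,m}v=\lambda_k v$, and evaluating at a reference set $\rho$ with $v(\rho)=1$ gives $\lambda_k=\sum_{\tau}b_{|\rho\cap\tau|}\,v(\tau)$. I would organise this sum by the number $j$ of pairs whose $b_i$-member lies in $\tau$ (contributing the sign $(-1)^{j}$ and the factor $\binom{k}{j}$, with the pairs then contributing $k-j$ to the overlap) and by the overlap $s$ of $\tau$ with $\rho$ on the $2m-2k$ free elements (contributing the remaining binomial factors), so that the index of $b$ equals $k-j+s$. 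Collecting terms yields a closed double sum for $\lambda_k$, which after a standard Vandermonde-type reindexing ($j\mapsto k-j$) becomes the displayed formula.

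The main obstacle is exactly this last step: one must first justify that the product-of-differences vectors are genuinely harmonic, i.e.\ nonzero and contained in the single layer $S^{(2m-k,k)}$ rather than in the lower layers $S^{(2m-k',k')}$ with $k'<k$, and then carry out the signed count and the resulting binomial identity (an Eberlein/dual-Hahn evaluation). Equivalently the scalar can be obtained by the branching rule on Specht modules, as the referee suggested. These representation-theoretic and combinatorial computations are the heart of the matter and are performed in \cite{vKS}.
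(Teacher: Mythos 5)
Correct, and essentially the paper's own approach: the paper in fact gives no proof of this proposition, attributing it to Schur's lemma, Young's rule and Specht-module computations carried out in the forthcoming paper \cite{vKS}, which is exactly your framework --- the multiplicity-free decomposition $M^{(m,m)}=\bigoplus_{k=0}^{m}S^{(2m-k,k)}$ forcing $B_{m,m}$ to act by a scalar $\lambda_k$ on each summand, together with the two-row hook-length count $\binom{2m}{k}-\binom{2m}{k-1}=\mu_k$. The two steps you explicitly defer (that the product-of-differences vectors are nonzero and lie in the single layer $S^{(2m-k,k)}$, and the signed-count/binomial reindexing that turns the eigenvector evaluation into the displayed double sum) are precisely the content the paper also delegates to \cite{vKS}, so your plan supplies at least as much detail as the paper's treatment and contradicts nothing in it.
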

\smallskip
\begin{example}

$B_{1,1}$ has eigenvalues:
\begin{itemize}
\item $b_0 +b_1$ with multiplicity 1, and
\item $b_1 - b_0$  with multiplicity 1.
\end{itemize}

$B_{2,2}$ has eigenvalues:
\begin{itemize}
\item $b_0  + 4b_1 + b_2$ with multiplicity 1,
\item $b_0  - 2b_1   + b_2$ with multiplicity 2 and
\item $b_2 - b_0$  with multiplicity 3.
\end{itemize}

$B_{3,3}$ has eigenvalues:
\begin{itemize}
\item $ b_0 + 9 b_1 + 9 b_2 +b_3$ with multiplicity 1,
\item $ b_0 - b_1 - b_2 + b_3$ with multiplicity 9,
\item  $ -b_0 + 3 b_1 - 3 b_2 + b_3$  with multiplicity 5 and
\item $-b_0 -3 b_1 + 3b_2 + b_3$  with multiplicity 5. 
\end{itemize}
\end{example}

\medskip
\subsection{Final step; computation of the index}

 \begin{proposition}\label{three}
 The restriction of $B_{m,m}$ to the 1-dimensional degree m part of the Milnor algebra  has the eigenvalues 
$$\lambda_k  = (-1)^m \frac{2m+1}{2m-2k+1}  \omega  \;  \;  (k=0,\cdots,m)$$
\end{proposition}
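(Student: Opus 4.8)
The plan is to deduce the statement from the eigenvalue formula of the preceding proposition, which I may assume. For that I only need to identify the entries $b_p$ of $B_{m,m}$ for the specific bilinear form $\beta$ coming from our $3$-jet, substitute them, and simplify the resulting binomial sum. So there are really two tasks: an (easy) algebraic identification of the $b_p$, and a (harder) closed-form evaluation of a double sum.

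First I would pin down the $b_p$. By definition $\beta(x^\sigma,x^\tau)=l(x^\sigma x^\tau)$ for two $m$-subsets $\sigma,\tau$ of $\{1,\dots,2m\}$, and the product $x^\sigma x^\tau$ lands in the one-dimensional top-degree part. If $|\sigma\cap\tau|=p$, then in $x^\sigma x^\tau$ the $p$ common variables occur squared while the $2m-2p$ variables of the symmetric difference occur to the first power; hence this degree-$(n-1)$ monomial has exactly $2(m-p)$ odd exponents, so its class equals $w_{m-p}$. Thus $b_p=l(w_{m-p})$, and feeding in the values $l(w_k)=(-1)^{m-k}\frac{(2m-2k)!!\,(2k-1)!!}{(2m-1)!!}\,\omega$ established earlier gives
\[ b_p=(-1)^p\,\frac{(2p)!!\,(2m-2p-1)!!}{(2m-1)!!}\,\omega,\qquad p=0,\dots,m. \]
As an internal check I would also record the three-term relation $b_{p+1}=-\frac{2p+2}{2m-2p-1}\,b_p$, which is just the recurrence for the $w_k$ read off in reverse.

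Next I substitute these $b_p$ into the assumed eigenvalue formula. With $n=2m+1$ the last binomial becomes $\binom{m+1-k+j}{m-k-p}$, and I reindex the double sum by $q=j+p$, so that the double factorial depends on $q$ alone and $\binom{m+1-k+j}{m-k-p}=\binom{m+1-k+j}{q+1}$:
\[ \lambda_k=\frac{(-1)^k\omega}{(2m-1)!!}\sum_{q=0}^{m-k}(-1)^q\,(2q)!!\,(2m-2q-1)!!\;T(q), \]
where
\[ T(q)=\sum_{j}(-1)^{j}\binom{k}{j}\binom{m-j}{q-j}\binom{m+1-k+j}{q+1}. \]
The point of this reindexing is that $T(q)$ is an alternating sum $\sum_j(-1)^j\binom{k}{j}P(j)$ with $P$ a polynomial in $j$ of degree $m+1$, i.e. a $k$-th finite difference, which is exactly the shape that collapses by Chu--Vandermonde / creative telescoping. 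After evaluating $T(q)$ the remaining $q$-sum should reduce to the claimed value, i.e. the whole computation amounts to the identity $\sum_{q}(-1)^q(2q)!!(2m-2q-1)!!\,T(q)=(-1)^{m-k}\frac{(2m+1)!!}{2m-2k+1}$.

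The main obstacle is precisely this last evaluation: it is a genuine double binomial/double-factorial identity, and the bookkeeping of the summation ranges together with the finite-difference step is where any error would hide. To control it I would lean on two structural features that pin down the answer. The reciprocal-linear form of the claim says that $(2m-2k+1)\lambda_k$ is independent of $k$ (equivalently $(2m-2k+1)\lambda_k=(2m-2k+3)\lambda_{k-1}$), which is suggestive that $B_{m,m}$ is proportional to the inverse of a natural incidence operator on the Johnson scheme; and the base value is the constant row sum of $B_{m,m}$ on the all-ones eigenvector spanning $W_0$, namely $\lambda_0=\sum_{p}\binom{m}{p}^2 b_p$, which must equal $(-1)^m\omega$. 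The latter reduces to the far more tractable single-sum identity $\sum_p(-1)^p\binom{m}{p}^2(2p)!!(2m-2p-1)!!=(-1)^m(2m-1)!!$. Verifying the base case and the ratio, together with the explicit $m\le 3$ values already tabulated (where $\lambda_k=(-1)^m\frac{2m+1}{2m-2k+1}\omega$ matches the listed eigenvalues of $B_{1,1},B_{2,2},B_{3,3}$ with the correct multiplicities $\mu_k$), gives both a derivation and an independent confirmation of the formula.
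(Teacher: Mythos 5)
Your overall plan coincides with the paper's: grant the eigenvalue formula of the preceding proposition, identify the numbers $b_p$ for our bilinear form, substitute, and evaluate the resulting sum. Your identification of the $b_p$ is correct and is exactly the paper's first step: since $x^\sigma x^\tau$ has $2(m-p)$ odd exponents when $|\sigma\cap\tau|=p$, one gets $b_p=l(w_{m-p})=(-1)^p\frac{(2p)!!\,(2m-2p-1)!!}{(2m-1)!!}\,\omega$. The genuine error is in the substitution step. The symbol $n$ in the binomial $\binom{j-k-m+n}{-k+m-p}$ of the eigenvalue formula is \emph{not} the number of polygon vertices $2m+1$: the matrix $B_{m,m}$ is indexed by $m$-subsets of the set of variables $x_1,\dots,x_{n-1}$, a ground set of size $2m$, and the formula (a specialization of the general result announced in \cite{vKS}) must be read with $2m$ in that slot. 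You can check this against the paper's own data: for $m=1$, $k=0$ your reading yields $\lambda_0=2b_0+b_1=0$, whereas the true eigenvalue on the all-ones vector is $b_0+b_1=-\omega$; likewise the Appendix kernel $F(m,k,j,p)$ contains $\binom{j-k+m}{j+p}$, i.e.\ upper entry $m-k+j$, not $m+1-k+j$. So the binomial in your reindexed inner sum should be $\binom{m-k+j}{q}$ rather than $\binom{m+1-k+j}{q+1}$, and the identity you set out to prove, $\sum_q(-1)^q(2q)!!(2m-2q-1)!!\,T(q)=(-1)^{m-k}\frac{(2m+1)!!}{2m-2k+1}$, is false with your $T(q)$: at $m=1$, $k=0$ its left side is $0$ and its right side is $-1$. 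Ironically, your own proposed consistency checks would have exposed this, since your (correct) row-sum formula $\lambda_0=\sum_p\binom{m}{p}^2b_p=(-1)^m\omega$ contradicts your substituted formula already at $m=1$.

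Two further points. First, bookkeeping: after setting $q=j+p$ the outer sum runs over $0\le q\le m$, not $0\le q\le m-k$, and the original constraint $p\le m-k$ becomes $j\ge q-m+k$, which is not enforced by vanishing binomials and must be carried inside $T(q)$; as written you both drop legitimate terms and include spurious ones. Second, and more substantively, even after these repairs the evaluation does not follow from Chu--Vandermonde: the inner sum is a $k$-th difference of a \emph{product} of two binomials in $j$ (a polynomial of degree $m\ge k$, of ${}_3F_2$ rather than ${}_2F_1$ type), so it neither vanishes nor collapses to a single term in one step. This is precisely the hard identity, equivalent to the Appendix's $\sum_{j,p}F(m,k,j,p)=(-1)^m(2m+1)/(2(m-k)+1)$, which the paper proves via the computer-discovered telescoping recurrence (\ref{key}) together with checking initial cases. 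Your proposal leaves this step at the level of ``should reduce,'' so even with the corrected substitution the core of the argument is still missing.
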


\begin{proof}
Take  $l(w_p) = b_{m-p}$.
 Note 
 $ b_p = (-1)^{p} \omega\frac{(2p)!! (2m-2p-1)!!}{ (2m-1)!!} \;  \;  (p=0,\cdots,m) .$ 
 We insert this in the formula for $\lambda_k$ in Proposition \ref{three}. 
 The formula in the present Proposition, which is in fact an enormous reduction, was expected by numerical results.
We  discovered  a key  recurrence  using the package \cite{weg}. See Appendix. The proof of the formula  becomes straight forward after checking the initial cases.
\end{proof}
 
\smallskip
  
  \begin{example} Set $\omega = 1$; \\
  $m=1$ eigenvalues  $-1,-3$,\\
 $ m=2$ eigenvalues $1, \frac{5}{3}, 5$,\\
  $m=3$ eigenvalues $-1,-\frac{7}{5},-\frac{7}{3},-7$,\\
  $m=8$ eigenvalues $1,\frac{17}{15},\frac{17}{13},\frac{17}{11},\frac{17}{9},\frac{17}{7},\frac{17}{5},\frac{17}{3},17.$
  
  \end{example}
   
Set $\omega = 1$ and conclude that the eigenvalues of $B_{m,m}$ have the same sign and the index is equal to  $\iota=  2 (-1)^m \tbinom{2m-1}{m-1}$. 
 This coincides with the result  in Theorem \ref{t:mainthm}.

\newpage

\section*{Appendix} 
We discovered the key identity (\ref{key}) below using \\
 Package MultiSum version 2.3 written by Kurt Wegschaider\\
enhanced by Axel Riese and Burkhard Zimmermann\\
Copyright Research Institute for Symbolic Computation (RISC),\\
Johannes Kepler University, Linz, Austria.\\
   
\small{
Recall that $(-1)!!=0!!=1$ and $n!!=n((n-2)!!)$ for $n\geq1$.

If $0\leq p\leq m-k$, $0\leq j\leq k$, put
$$F(m,k,j,p)=\frac{(-1)^{k+p} \binom{k}{j}\binom{m-j}{p} \binom{j-k+m}{j+p}(2 (j+p))\text{!!}  (-2 j+2 m-2
   p-1)\text{!!} }{(2 m-1)\text{!!}}.$$
Otherwise put $F(m,k,j,p)=0$.

Put 
\begin{align*}
Fj(m,k,j,p)=\ &(2 m-7)(k-1) (
\\&-4 (k-2) (k+m-4) F(m-3,k-3,j,p+1)
\\&+4 (m-2) (k-m)
   F(m-3,k-2,j,p)
\\&   -4 (m-2) (k-m) F(m-3,k-2,j,p+1)
\\&   -(2 m-5) (4 k+2 m-9)
   F(m-2,k-2,j,p+1)
\\&   -(2 k-3) (2 m-5) F(m-2,k-2,j+1,p+1)
\\&   +2 (2 m-5) (k-m)
   F(m-2,k-1,j,p)
\\&   +2 (2 m-5) (m-k) F(m-2,k-1,j,p+1)
\\&   +2 (2 m-5) (k-m)
   F(m-2,k-1,j+1,p)
\\&   +2 (2 m-5) (m-k) F(m-2,k-1,j+1,p+1)
\\&   -(2 m-5) (2 m-3)
   F(m-1,k-1,j,p+1)
\\&   -(2 m-5) (2 m-3) F(m-1,k-1,j+1,p+1))
\end{align*}
    
and
\begin{align*} 
   Fp(m,k,j,p)=&(k-1)(
\\&   -4 (k-3) (k-2) (2 m-5) F(m-4,k-4,j,p)
\\&   -4 (k-2) (2 m-7) (k+2 m-6)
   F(m-3,k-3,j,p)
\\&   +4 (m-2) (2 m-7) (m-k) F(m-3,k-2,j,p)
\\&   -(2 m-7) (2 m-5)
   (4 k+2 m-9) F(m-2,k-2,j,p)
 \\&  +2 (2 m-7) (2 m-5) (m-k) F(m-2,k-1,j,p)
\\&   -(2
   m-7) (2 m-5) (2 m-3) F(m-1,k-1,j,p)).
\end{align*}


Then we have the key identity
\begin{align*}& 
4 (k-3) (k-2) (k-1)(2 m-5) F(m-4,k-4,j,p)
\\&\refstepcounter{equation}\tag{\theequation}\label{key}
+4 (k-2)(k-1) (2 m-7) (k+2 m-6)
   F(m-3,k-3,j,p)
\\&   +(2 m-7) (2 m-5)(k-1) (4 k+2 m-9) F(m-2,k-2,j,p)
\\&   +(2 m-7) (2
   m-5) (2 m-3)(k-1) F(m-1,k-1,j,p)
\\&=Fj(m,k,j+1,p)-Fj(m,k,j,p)+Fp(m,k,j,p+1)-Fp(m,k,j,p). 
   \end{align*}
  
It yields a recurrence for the $\sum_{j,p}F(m,k,j,p)$.

Similarly, put 
\begin{align*} g(m,p)=\ &4 m^2 F(m-1,0,0,p-2)
\\&-(2 m-1) (2 m+1) F(m,0,0,p-2)
\\&+(2 m-1) (2 m+1)
   F(m,0,0,p-1)
\\&   -(2 m-1) (2 m+1) F(m+1,0,0,p-2)
\\&   -(2 m-1) (2 m+1)
   F(m+1,0,0,p-1).
    \end{align*}

   Then $(2 m-1) (2 m+1) F(m,p-2)+(2 m-1) (2 m+1) F(m+1,p-2)
    =g(m,p+1)-g(m,p)$, which yields a recurrence for the $\sum_{p}F(m,0,0,p)$.

One concludes that  $\sum_{p}F(m,0,0,p)=(-1)^m$ for $m\geq0$ and then that
$$\sum_{j,p}F(m,k,j,p)=(-1)^m (2 m + 1)/(2 (m - k) + 1)$$ for $m\geq k\geq0$.
}
 \end{document}